\renewcommand*\env@matrix[1][*\c@MaxMatrixCols c]{%
  \hskip -\arraycolsep
  \let\@ifnextchar\new@ifnextchar
  \array{#1}}
\newtheorem{prevtheorem}{Theorem}
\newenvironment{Birkhoff}{{\it{Birkhoff's Theorem:} }}
\newenvironment{brualdi}{{\it{Theorem:} }}
\theoremstyle{plain}
\newtheorem{theorem}{Theorem}[section]
\newtheorem{lemma}[theorem]{Lemma}
\newtheorem{corollary}[theorem]{Corollary}
\newtheorem{proposition}[theorem]{Proposition}
\theoremstyle{definition}
\newtheorem{definition}[theorem]{Definition}
\newtheorem{case[theorem]}{Case}
\theoremstyle{remark}
\newtheorem{remark}{Remark}
\numberwithin{equation}{section}
\newcommand{\G}{{\mathcal{G}}}
\DeclareMathOperator{\F}{{\mathbb F}}
\newcommand{\Abs}[1]{{\left|{#1}\right|}}
\newcommand{\Set}[1]{{\left\{{#1}\right\}}}
\newcommand{\M}{\mathcal{M}}
\newcommand{\supp}{{\rm supp\,}}
\newcommand{\ZZ}{{\mathbb Z}}
\newcommand{\NN}{{\mathbb N}}
\newcommand{\bigzero}{\mathbf{0}}
\newenvironment{proofofB}{{\bf {Proof of Theorem \ref{Thm:B}.} }}{\hfill $\blacksquare$ \\}
\title{Doubly stochastic arrays with small support}
\author{Maria Loukaki}
\address{Department of Mathematics and Applied Mathematics, University of Crete, Voutes Campus, 70013 Heraklion, Crete, Greece.}
\email{mloukaki@uoc.gr}
\date{}
\begin{document}

\subjclass{15B36, 15B48, 15B51,05B20, 05B45 }

\begin{abstract}
An $n  \times m$ non-negative array with row sum $m$ and column sum $n$ is called doubly stochastic.  We answer the problem of finding  doubly stochastic arrays  of smallest posible support for every $1 <n  \leq m$.  Any array of minimum support is extremal in the sence of convexity, while examples of extremal arrays that are not of minimum support are given. But when $n,m$ are coprime integers extremal arrays are precisely those of minimum support. 
\end{abstract}

\maketitle

\section{Introduction}

According to the definition given by Caron, et al.  in \cite{Caron} an $n \times  m$ array $A = (a_{i,j})$ with $a_{i,j} \geq 0$ is called  doubly stochastic  (with uniform marginals) if 
\[
\begin{aligned}
\sum_{i=1}^n a_{i,j}&=n &\text{for all } j=1, \cdots , m \\
\sum_{j=1}^m a_{i,j}&=m &\text{for all } i=1, \cdots , n\\
\end{aligned}
\]

The set of all  $n \times  m$ doubly stochastic arrays is denoted by $\M(n, m)$. Furthermore, two arrays in $\M(n,m)$ are called {\it equivalent} if one can be transformed into the other by permuting rows and columns.

We should mention  here that the above definition differs slightly from  the usual definition for  square 
doubly stochastic arrays (matrices). The common definition for $\M(n,n)$ requires the matrices  to have nonnegative entries and all row and column sums equal to 1. 
These matrices have been studied extensively,  see for example Chap. 2 in \cite{Mar}.

An array  $ M \in \M(n,m)$ is called extremal if it cannot be represented as a convex combination of other  doubly stochastic arrays different from $M$, that is, $M$ is an extremal element in the convex set $\M(n,m)$. 
For square $n \times n$ matrices, a full characterization of the extremal matrices  in $\M(n,n)$ is known by a classical result  due to G. Birkhoff \cite{Bir1}, that we state here, using the notation of  \cite{Caron} that we have adopted.

 \begin{Birkhoff}
 \it{$M \in \M(n,n)$ is extremal  if and only if $\frac{1}{n}M$ is a permutation matrix. That is,   $M \in \M(n,n)$ is extremal if and only if  $\frac{1}{n}M$ is equivalent to $I_n$,  the identity matrix.}
 \end{Birkhoff}

Several types of  characterization of  the extremal doubly stochastic arrays in $\M(n,m)$ exist using  either a matrix representation in some normal form,  
graph theory or faces of polyhedra, just  to mention a few. (The interested  reader  could look at the list presented in the introduction of \cite{Caron}). We point out here, that if $M \in \M(n,m) $ is extremal then all its entries are integers (see the first remarks in \cite{Caron}).

Li, et al, in \cite{Xin} have characterized extremal arrays  using their support, that is,  the set of their nonzero entries. In particular,  they proved that  a array $M \in \M(n,m)$ is extremal if and only if  its support $\supp(M)$ is unique in the set $\{ \supp(A) \mid  A \in \M(n,m)\}$ (Theorem 1  in \cite{Xin}).


In addition, the support of a doubly stochastic array has attracted the attention of  Kolountzakis and Papageorgiou \cite{Kol1} in relation with some  tiling problems.
If one views a $n \times m$ array as a function $f$ on the product of cyclic groups
$$
G = \ZZ_n \times \ZZ_m
$$
then, with the subgroups
$$
G_1 = \ZZ_n \times \Set{0},\ \ G_2 = \Set{0} \times \ZZ_m,
$$
the constant row sum and the constant column sum properties of the array are written as
\begin{equation}\label{tiling}
\sum_{g \in G_2} f(x-g) = m,\ \ \sum_{g \in G_1} f(x-g) = n,
\end{equation}
respectively, valid for all $x \in G$. In this language one seeks a nonnegative function $f$ on $G$, of as small a support as possible, which {\em tiles} simultaneously with the set of translates $G_1$ as well as $G_2$ (see \cite{Kol1} for a more precise definition).

These problems, of tiling simultaneously with various subgroups, derive \cite{kolountzakis1997multi} from a classical problem of Steinhaus who asked if there is a subset of the plane which tiles the plane simultaneously with all rotates of the lattice $\ZZ^2$. This problem is still very much open in case one asks for a measurable subset of the plane \cite{kolountzakis1999steinhaus} but the answer is known to be affirmative without the measurability requirement \cite{jackson2002sets}. Interestingly, in dimension 3 and higher the situation is the exact opposite: no measurable Steinhaus sets exist \cite{kolountzakis1999steinhaus,kolountzakis2002steinhaus} but we do not know if such sets exist if we drop measurability \cite{jackson2002sets}. In \cite{kolountzakis1999steinhaus} the problem was first investigated of how to find a \textit{function} $f$ (as opposed to indicator function for Steinhaus sets) on the plane which tiles simultaneously with a finite set of rotates of $\ZZ^2$ and whose support has small {\em diameter}. This problem was continued in \cite{Kol1} by examining the problem in a more general finite abelian group setting, the prototype of which is to ask for a function $f$ on $G$ satisfying \eqref{tiling} and has small support.

In \cite{Kol1} the  quantity     $S(n,m)$ was defined  as follows.

\begin{definition} 
$S(n,m) = \min\Set{\Abs{\supp A}:\ A \in \M(n,m)}.$

The arrays $M \in \M(n,m)$ with $\Abs{\supp M} = S(n,m)$ are called  {\bf minimum} arrays in $\M(n,m)$.
Furthermore, we call a column of $A \in \M(n,m)$ a {\bf monocolumn }  if it contains 
exactly one non-zero entry, which obviously should equal $n$. 
\end{definition}

It was shown (see Theorem 4.3 and Lemma 4.5 in \cite{Kol1})  that  $S(n, kn)= kn$ while  $S(n,kn+1)=(k+1)n$. In addition,  a question has been raised about the value of $S(n, kn+r)$ for $1<r <n$.
Our main theorem in this short  note  gives a complete answer to Question 7 in  \cite{Kol1} and  states the following.
 \begin{prevtheorem}
 For all   integers $1< n,  m $,  we have 
 $
S(n,m)=n+m- \gcd(n,m)
$
 \end{prevtheorem}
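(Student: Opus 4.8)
The plan is to prove the two inequalities $S(n,m)\ge n+m-\gcd(n,m)$ and $S(n,m)\le n+m-\gcd(n,m)$ separately. Write $d=\gcd(n,m)$, $a=n/d$, $b=m/d$, so that $\gcd(a,b)=1$. Throughout I would encode the support of a matrix $M=(a_{i,j})\in\M(n,m)$ as a bipartite graph $\Gamma_M$ on the vertex set $\{r_1,\dots,r_n\}\cup\{c_1,\dots,c_m\}$, placing an edge $r_ic_j$ precisely when $a_{i,j}>0$; then $\Abs{\supp M}$ is the number of edges of $\Gamma_M$. Since every row sum equals $m>0$ and every column sum equals $n>0$, no vertex of $\Gamma_M$ is isolated.

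For the lower bound I would count edges through connected components. If $\Gamma_M$ has $c$ connected components, then, being a graph on $n+m$ vertices with no isolated vertex, it has at least $n+m-c$ edges (a connected graph on $N$ vertices has at least $N-1$ edges). The key point is that $c\le d$. Fix a component, and let $R$ and $C$ be its sets of rows and columns; since a component is closed, every nonzero entry lying in a row of $R$ sits in a column of $C$, so summing the entries of the submatrix $R\times C$ first by rows and then by columns gives $\Abs{R}\,m=\Abs{C}\,n$. Dividing by $d$ yields $\Abs{R}\,b=\Abs{C}\,a$, and $\gcd(a,b)=1$ then forces $a\mid\Abs{R}$, hence $\Abs{R}\ge a=n/d$. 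Summing this over the $c$ components and using $\sum\Abs{R}=n$ gives $c\le d$, so that $\Abs{\supp M}\ge n+m-c\ge n+m-d$.

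For the upper bound I would exhibit a matrix attaining $n+m-d$ through a block-diagonal construction: take $d$ diagonal blocks, each of size $a\times b$, and fill each block with $d$ times a matrix $B\in\M(a,b)$, so that each block has row sums $db=m$ and column sums $da=n$, exactly as required, while the off-diagonal zeros cost nothing. It then suffices to produce a single $B\in\M(a,b)$ whose support is a spanning tree of $K_{a,b}$, i.e. has exactly $a+b-1$ positive entries; the $d$ blocks then contribute $d(a+b-1)=n+m-d$ nonzero entries in total. To build $B$ I would run the northwest-corner (staircase) rule on the transportation problem with all row sums equal to $b$ and all column sums equal to $a$.

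The main obstacle, and the place where coprimality enters decisively, is showing that this staircase is non-degenerate, i.e. that it produces exactly $a+b-1$ strictly positive entries rather than fewer. A degeneracy (a diagonal jump that skips a cell) occurs precisely when some interior partial row sum equals some interior partial column sum, that is when $ib=ja$ for some $1\le i\le a-1$ and $1\le j\le b-1$; but $ib=ja$ with $\gcd(a,b)=1$ forces $a\mid i$, which is impossible in that range. Hence the staircase never jumps, all $a+b-1$ visited cells carry positive mass, and the construction is valid. Combining the two bounds gives $S(n,m)=n+m-\gcd(n,m)$.
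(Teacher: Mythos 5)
Your proof is correct, and it takes a genuinely different route from the paper's. The paper handles both bounds simultaneously by inducting along the Euclidean algorithm: it builds $\F(n,m)$ from blocks $E(n,k_1n), E(r_1,k_2r_1),\dots$ (one per division step), argues minimality by peeling off the first $k_1n$ columns (one entry each, the least possible) and recursing on $\F(r_1,n)$, and then telescopes the support count $k_1n+k_2r_1+\cdots+k_{t+1}r_t = n+m-r_t$. You instead separate the two inequalities. Your lower bound --- every $M\in\M(n,m)$ satisfies $\Abs{\supp M}\ge n+m-c$ where $c$ is the number of components of the bipartite support graph, and $c\le d=\gcd(n,m)$ because the balance identity $\Abs{R}\,m=\Abs{C}\,n$ on each component forces $a\mid\Abs{R}$ --- is unconditional and self-contained, whereas the paper's minimality argument is tied to its particular construction (and leans on the base cases from the Kolountzakis--Papageorgiou paper). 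Your argument also reveals the structure of equality: a minimum matrix must have exactly $d$ components, each a spanning tree, which meshes with Brualdi's tree characterization that the paper invokes only later, for Theorem III; in fact your component bound combined with Brualdi's theorem gives Theorem II at once (coprimality forces the support graph of any matrix to be connected, so an extremal matrix's forest is a single tree with exactly $n+m-1$ edges), whereas the paper derives Theorem II from a normal-form theorem of Caron et al. Your upper bound --- a block-diagonal sum of $d$ copies of $d$ times a northwest-corner staircase matrix in $\M(a,b)$, nondegenerate precisely because $\gcd(a,b)=1$ --- replaces the paper's Euclidean-algorithm recursion; what the paper's construction buys in exchange is finer bookkeeping (the entries of $\F(n,m)$ and their multiplicities can be read off the division steps, a fact used in the closing remarks) and reuse of the same matrices in Theorem III. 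The two constructions are in any case close relatives: in the proof of Theorem III the paper notes that $\F(n,m)$ is equivalent to a direct sum of $d$ equal blocks, which is exactly the shape of your example.
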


According to Corollary 2 in \cite{Xin} an array $A \in  \M(n,m)$ is not extremal if and only if  there exists $B \in \M(n,m)$ with $\supp B \subsetneq \supp A$. Hence every  minimum array in  $\M(n,m)$ is also  extremal.
This gives an easy  way to verify that an  array is extremal just by looking at the size of its support, if this happens to be  minimum. But  there are extremal arrays that are not minimum (some examples are given at the end of  this note) so the condition on the size of the support is only sufficient.  Nevertheless, when $n, m$ are coprime integers it is also necessary as the next  result states.
\begin{prevtheorem}\label{Thm:B}
Let $n,m$ be coprime integers. Then $M \in \M(n,m)$ is extremal if and only if $M$ is minimum. That is, $M$ is extremal if and only if \[\Abs{\supp M}= n+m-1
\]
\end{prevtheorem}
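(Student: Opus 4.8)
The plan is to prove the two implications separately, noting that the easy direction---\emph{minimum} $\Rightarrow$ \emph{extremal}---already follows from Corollary 2 in \cite{Xin} and requires no coprimality hypothesis, exactly as remarked just before the statement. The substance is therefore the reverse implication: if $n,m$ are coprime and $M \in \M(n,m)$ is extremal, then $\Abs{\supp M} = n+m-1$. Throughout I would encode the support of a matrix $A \in \M(n,m)$ as a bipartite graph $G_A$ on the vertex set consisting of $n$ row-vertices and $m$ column-vertices, joining row $i$ to column $j$ precisely when $a_{i,j} > 0$.

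The first step, and the place where coprimality enters decisively, is to show that $G_A$ is \emph{connected} for \emph{every} $A \in \M(n,m)$. Since each row sum equals $m>0$ and each column sum equals $n>0$, no vertex of $G_A$ is isolated. Let a connected component of $G_A$ use the rows indexed by $R$ and the columns indexed by $C$; by maximality no positive entry leaves this block, so all the mass sitting in the rows of $R$ lies in the columns of $C$, and conversely. Summing the entries of the component in two ways gives $\Abs{R}\, m = \Abs{C}\, n$. As $\gcd(n,m)=1$, this forces $n \mid \Abs{R}$, and since $1 \le \Abs{R} \le n$ we obtain $\Abs{R}=n$ and likewise $\Abs{C}=m$. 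Hence a single component exhausts all rows and columns, so $G_A$ is connected.

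The second step is the classical vertex/cycle dichotomy for transportation-type polytopes: an extremal $M$ must have \emph{acyclic} support. I would argue the contrapositive. If $G_M$ contains a cycle, then, the graph being bipartite, that cycle has even length and alternates between rows and columns; adding $+\epsilon$ and $-\epsilon$ to its edges in alternating fashion leaves every row sum and every column sum unchanged. All entries along the cycle are strictly positive, so I can increase $\epsilon$ until the smallest ``$-\epsilon$'' entry reaches $0$, producing a matrix $B \in \M(n,m)$ with $\supp B \subsetneq \supp M$. By Corollary 2 in \cite{Xin} this means $M$ is not extremal, the desired contradiction. Therefore $G_M$ is a forest.

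Combining the two steps, for coprime $n,m$ an extremal $M$ has a support graph that is both connected and acyclic, i.e.\ a spanning tree on $n+m$ vertices; such a tree has exactly $(n+m)-1$ edges, so $\Abs{\supp M}=n+m-1$. Since the main theorem gives $S(n,m)=n+m-\gcd(n,m)=n+m-1$ in the coprime case, $M$ is minimum, which completes the nontrivial direction. I expect the genuine obstacle to be the connectedness step: the perturbation argument and the tree count are routine, but the clean passage from the mass-balance identity $\Abs{R}\,m=\Abs{C}\,n$ to full connectedness is precisely where the coprimality of $n$ and $m$ is indispensable---without it the support can split into $\gcd(n,m)$ blocks, which is exactly why the equivalence must fail in general.
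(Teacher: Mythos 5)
Your proof is correct, but it follows a genuinely different route from the paper's. The paper's proof is a squeeze argument built on Theorem 5 of \cite{Caron}: writing $m=kn+r$, every extremal $M$ is equivalent to a sum $M_B+M_R$ in which each row of $M_B$ has exactly $k+1$ positive entries and $M_R$ has at most $r-1$ positive entries, so that $(k+1)n \le \Abs{\supp M} \le (k+1)n+(r-1)$; when $\gcd(n,m)=1$ the upper bound coincides with $S(n,m)=n+m-1$, and the bounds pinch. You replace this imported structure theorem with two elementary graph-theoretic steps: the mass-balance identity $\Abs{R}\,m=\Abs{C}\,n$ on a connected component, which under coprimality forces the support graph of \emph{every} matrix in $\M(n,m)$ to be connected, and the alternating-cycle perturbation showing that extremal supports are acyclic --- the latter being exactly Brualdi's criterion, which the paper itself quotes and uses, but only later in the proof of Theorem III. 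A connected acyclic graph on $n+m$ vertices is a spanning tree, so $\Abs{\supp M}=n+m-1$ on the nose. What the paper's route buys is brevity plus an intermediate two-sided bound valid for all $n,m$, quantifying that an extremal matrix can exceed the minimum support by at most $r-1$ entries in the general case. What your route buys is self-containedness (you need only Corollary 2 of \cite{Xin}, or merely the definition of extremality, rather than Caron's decomposition) and transparency about where coprimality acts: it is precisely what prevents the support from splitting into blocks. As a bonus, your connectedness step gives an independent proof of the lower bound $S(n,m)\ge n+m-1$ in the coprime case --- and the same counting shows in general that the support graph has at most $\gcd(n,m)$ components, recovering the lower bound $n+m-\gcd(n,m)$ of Theorem I --- whereas the paper's squeeze has to import that value from its Theorem I.
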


The rest of the paper contains a method to construct minimum arrays in $\M(n,m)$. In addition,  a family of examples of extremal arrays  whose size of support  is one more than the minimum is constructed.  Finally, a few more examples of arrays are given as 
counterexamples  to possible  generalizations.

\section{ Main Results }
We start with a  method to produce minimum  doubly stochastic arrays of  size $n \times m $ for all integers $1 \leq  n\leq  m $.

Is it already known (see Proposition 4 in \cite{Caron}),  that in the case $m=kn$ the array $E(n,kn) \in \M (n , kn)$  defined as

\begin{equation}\label{eq.matrix}
E(n,kn)=
 \left.\left( 
                  \vphantom{\begin{array}{c}1\\1\\1\\1\\1\\1\\1\\1\\ \end{array}}
                  \smash{\underbrace{
                      \begin{array}{ccc}
                      \overbrace{n \cdots  n}^k &\cdots                      & \cdots \\
                      \cdots                    & \overbrace{n  \cdots  n}^k & \cdots \\
                      \cdots                    & \cdots                     & \cdots\\
                       \cdots                   & \cdots                     & \overbrace{n  \cdots  n}^k
                      \end{array}
                      }_{ \, kn \text{\, columns}} }
              \right) \,  \right\}
              \,n \text{\, rows}
              \\
\\              
\end{equation}
\\
\\
is an extremal array of size $n \times kn$. Furthermore, $E(n,kn)$ is minimum since it has exactly one element per column. So, $|\supp(E(n,kn))|= kn=S(n,kn)$.

Assume now that $n, m \in \NN$ are given with $1< n \leq m$. We use the Euclidean algorithm applied to  $n,m$ to produce as many extremal arrays of type \eqref{eq.matrix} as the steps of the algorithm. That is, assume that the Euclidean  algorithm goes as follows:

\begin{equation}\label{eq1}
\begin{aligned}
m &=k_1n+r_1\\
n &=k_2r_1 +r_2\\
r_1&=k_3 r_2+r_3 \\
\ &\vdots\\
r_{t-2}&=k_tr_{t-1} + r_{t}\\
r_{t-1}&=k_{t+1}r_t
\end{aligned}
\end{equation}

Then at every step we produce the arrays 
$E(n, k_1n), E(r_1, k_2r_1),  \cdots , E(r_t, k_{t+1}r_t)$.
 We put them together in a block form to make an $n \times m$ array $\F(n,m)$ as follows
\begin{frame}
\footnotesize
\setlength{\arraycolsep}{0.4pt} 
\medmuskip = 1mu 
    \[
 \F(n,m)= \begin{pNiceMatrix}
&\Vdots  &  &   &E(r_1,k_2r_1)^T  &    &&   & &  \\
         &\dot  & \Ldots &  & & & && &  \\ 
            &    &  &. & & & & & & \\ 
 E(n, k_1n) &       &  &\Vdots & &   &   &   & & \\
         & & & &  &E(r_3, k_4r_3)^T & & & &\\
             &       &E(r_2, k_3r_2)     &\dot &\Ldots & && && \\ 
               &    &  & & &  &. & & & \\
             &       &   &    &\, \, \qquad \quad  \,E(r_4, k_5r_4)  &  &\Vdots &\ddots & &\\
            &       &    &     &  &  & && &B\\
 \end{pNiceMatrix}
 \]
 \end{frame}
 
 
 were $B= \begin{cases}
\begin{aligned}
          &E(r_t, k_{t+1}r_t) ^T &\quad   &\text{if $t$ is odd}\\
          &E(r_t, k_{t+1}r_t)    &\quad  &\text{if $t$ is even}\\
\end{aligned}
         \end{cases}$

 To clarify our method  we compute  $\F(8,27)$. The Euclidean Algorithm for $(8,27)$ is 
$$ 
 \begin{aligned}
     27 &=3 \cdot 8 +3\\
     8 &=2 \cdot 3 +2\\
     3 &=1 \cdot 2 +1\\
     2 &= 2\cdot 1
 \end{aligned}
 $$
Hence we form the arrays
\[
E(8, 3\cdot 8)=
\left.\left( 
                  \vphantom{\begin{array}{c}1\\1\\1\\1\\1\\1\\1\\1\\ \end{array}}
                  \smash{\underbrace{
                      \begin{array}{ccc}
                      \overbrace{8 \cdots  8}^3 &\cdots                      & \cdots \\
                      \cdots                    & \overbrace{8  \cdots  8}^3 & \cdots \\
                      \cdots                    & \cdots                     & \cdots\\
                       \cdots                   & \cdots                     & \overbrace{8  \cdots  8}^3
                      \end{array}
                      }_{24 }}
              \right)\quad  \right\}
              \,8
\]

\[ E(3, 2\cdot 3)^T= \begin{pmatrix}
 3 & & \\
 3 & & \\
   &3 & \\
   & 3 & \\
   & & 3\\
   & & 3\\
 \end{pmatrix}, \, \,  \, \quad \quad
E(2, 1 \cdot 2)=\begin{pmatrix}
2&\\
   &2\\
 \end{pmatrix}, \, \, \, 
 \qquad
 E(1, 2\cdot 1)^T=\begin{pmatrix}
   1\\
   1\\
 \end{pmatrix}
\]

Putting  them together we  get 

\begin{center}
 \begin{align*}
\F&(8,27)= \\
\ &    \left(
    \begin{array}{@{}*{28}{c}@{}}
&8 &8 &8 &  &  &  & & & & & & & & & & & & & & & & & &     &3 & & \\
&  &  &  &8 &8 &8 & & & & & & & & & & & & & & & & & &      &3 & &\\
&  &  &  &  &  &  &\ldots & & & & & & & & & & & & & & & & &  & &3 &\\
&  &  &  & & & & & & &\ldots & & & & & & & & & & & & & &     & &3 &\\
&  &  &  & & & & & & & & & &\ldots & & & & & & & & & & &      & & &3\\
&  &  &  & & & & & & & & & & & & &\ldots & & & & & & & &     & & &3\\
&  &  &  & & & & & & & & & & & & & & & &8 &8 &8 & & &    &2 & &1\\
&  &  &  & & & & & & & & & & & & & & & & & & &8 &8 &8     & &2 &1\\
\end{array}
\right).
\end{align*}
\end{center}

The following remark is a special case of Proposition 2 in \cite{Caron},  according to which an array $A \in \M(n,m)$ is extremal if and only if there is no "cycle" in its support. 

\begin{remark}\label{Rem:1}
If $A \in \M(n,m)$ is extremal then $A$ does not contain a "square" of non-zero entries. That is,  there does not exist  non-zero entries $x,y,z,w $  in $A$  that form a square: 
\[
A=\begin{pmatrix}
  &  &  &  & &\\
   & &x  &  &y &\\
  &  & &  &    & \\
    &  &z  & &w &\\
   & & & & & \\
\end{pmatrix}
\]

\end{remark}

In order to prove Theorem 1 we need the following 
\begin{proposition}\label{Prop:mono}
Let $m = kn +r$ with $0< r <  n$. Then there exists a minimum array $A \in \M(n,m)$ with exactly $kn$  monocolumns.  In other words, there exists a minimum array $A$ so that   every row of $A$ has exactly $k$ entries equal to $n$.
\end{proposition}
 
\begin{proof}

Assume proposition does not hold for $\M(n,m)$. Let $p$ be the maximum  number of monocolumns a minimum array in $\M(n,m)$ can have, hence $p < kn$. We define
\[
X:= \{ A \in \M(n,m) \mid A \text{\, is minimum and has exactly $p$ monocolumns } \}. 
\]
Clearly every $A \in X$  contains at least one row that does not have $k$ entries equal to $n$ (or else $p=kn$).
Among the entries  of those  rows (the rows that contain less than $k$ entries equal to $n$) we write $m(A)$ for the maximum entry strictly less than $n$. Let 
\[
x:= \max \{ m(A)  \mid A \in X\}
\]
and assume $A_0 \in X$ is such that $m(A_0)= x$. Clearly $0< x < n$ and we assume that $x$ is in the $(i_0, j_0)$ entry  of the matrix $A_0$. Then looking at the $j_0$ column of $A_0$ we deduce that there exist positive integers $t_1, \cdots , t_l$ in the $j_0$ column of $A_0$ apart from $x$ such that  $0< t_1, \cdots, t_l <n$  while 
\[
\sum_{i=1}^l t_i + x = n
\]
Similarly looking at the $i_0$ row of $A_0$ we conclude that there exist positive integers $s_1, \cdots s_u$ in the $i_0$-row of $A_0$ apart from $x$  such that $0 < s_1, \cdots , s_u < n$ while
\[
\sum_{j=1}^u s_j + x \geq  n+r
\]
where the last inequality follows from the fact that at the $i_0$-row of $A_0$ exist less than $k$ entries equal to $n$, while the sum of all  the elements of the row
equals $m = kn +r$. 
Clearly $l , u >0$ while   $\sum t_i < \sum s_j$.

{\bf Case 1} Assume first that  $t _1 \leq s_j$ for some $j= 1, \cdots , u$. 

If $t_1$ is in $(a_1, j_0)$ position   of $A_0$ and  $s_j$ is in the $(i_0,b_j) $ one,  observe that the entry in the  $(a_1, b_j)$ position  of $A_0$ equals $0$ because otherwise  the entries 
\[
(a_1, j_0), (i_0, j_0), (i_0, b_j), (a_1, b_j)
\]
form a non-zero "square" in $A_0$ contradicting Remark \ref{Rem:1} (as $A_0$ is minimum).

Now we construct an array  $B$ from $A_0$ in the following way:
Every entry of   $B$ is identical with the corresponding entry of $A$ apart from the four entries lying in the positions
$(a_1, j_0), (i_0, j_0), (i_0, b_j), (a_1, b_j)$. In those  positions the entries of $A_0$ were $t_1, x, s_j, 0$ (in the order they appear)  and we replace them with the entries $0, x+t_1, s_j-t_1, t_1$ respectively.  That is 
\[
A_0=\begin{pmatrix}
  &  &  &  & &\\
   & &t_1  &  &0 &\\
  &  & &  &    & \\
    &  &x  & &s_j &\\
   & & & & & \\
\end{pmatrix}
\longrightarrow
B =   \begin{pmatrix}
  &  &  &  & &\\
   & &0  &  &t_1 &\\
  &  & &  &    & \\
    &  &x+t_1  & &s_j-t_1 &\\
   & & & & & \\
\end{pmatrix}.
\]
Clearly  $B \in \M(n,m)$ (the row and column sums have remained unchanged). Furthermore,  $|\supp B | \leq |\supp A_0 | $
 and thus they are equal as $A_0$ is minimum. Hence $B $ is also minimum. 
In addition the monocolumns of $A_0$ have been transferred unchanged to  monocolumns of $B$ (as $s_j < n$). Hence the number of monocolumns of $B$ can't be less than the number of monocolumns of $A_0$ and thus it is exactly $p$ ($p$ being maximum). We conclude that  $x+t_1 < n$  while $B \in X$. Now,  the $i_0$-row of $B$ has less than $k$ entries equal to  $n$ 
(actually it is the same number as the one in the $i_0$-row of $A_0$) 
and in position $(i_0, j_0)$ its entry is $x+t_1> x$. Hence  
\[
m(B) \geq x+t_1 > x=  \max \{ m(A)  \mid A \in X\}.
\]
This final contradiction finishes  Case 1.

{\bf Case 2} Assume now that $t_1 > s_j$  for all  $j= 1, \cdots , u$. 

Assume again that  $t_1$ is in $(a_1, j_0)$ position   of $A_0$ while $s_1$ is in $(i_0,b_1) $ and  observe (as in Case 1) that the entry in the  $(a_1, b_1)$ position  of $A_0$ equals $0$. 

Now we construct an array  $B$ from $A_0$ in a similar way as in Case 1. That is,  every entry of   $B$ is identical with the corresponding entry of $A$ apart from the four entries lying in the positions
$(a_1, j_0), (i_0, j_0), (i_0, b_1), (a_1, b_1)$. In those  positions the entries of $A_0$ were $t_1, x, s_1, 0$ (in the order they appear)  and we replace them with the entries $t_1-s_1, x+s_1, 0, s_1$ respectively.  That is 
\[
A_0=\begin{pmatrix}
  &  &  &  & &\\
   & &t_1  &  &0 &\\
  &  & &  &    & \\
    &  &x  & &s_1 &\\
   & & & & & \\
\end{pmatrix}
\longrightarrow
B =   \begin{pmatrix}
  &  &  &  & &\\
   & &t_1-s_1  &  &s_1 &\\
  &  & &  &    & \\
    &  &x+s_1  & &0 &\\
   & & & & & \\
\end{pmatrix}
\]
A similar argument as in Case 1 implies that $B \in X$ while 
\[
m(B) \geq x+s_1 > x=  \max \{ m(A)  \mid A \in X\}.
\]
This final contradiction finishes  Case 2 and completes the proof of the proposition.
\end{proof}

 An immediate consequence of Proposition \ref{Prop:mono} are the following two corollaries.
 \begin{corollary}\label{Cor:1}
 Let $m = kn +r$ with  $0< r <n $. Then there exists a minimum array in $\M(n, m)$ that is equivalent to 
 \[
 B=  \begin{pmatrix}
& E(n,k n)  &| & \hat{B}^T  &\\
\end{pmatrix}
 \]
 where $\hat{B} \in \M(r, n)$. 
 \end{corollary}

\begin{corollary}\label{Cor:2}
 Let $m = kn +r$ with  $0< r <n $. Assume $A= \begin{pmatrix} & E(n,k n)  &| & \hat{A}^T  &\\
\end{pmatrix} $ where $\hat{A} \in M(r, n) $. If $\hat{A}$ is minimum in $\M(r,n)$ then $A$ is minimum in $M(n,m)$.
 \end{corollary}

\begin{proof}
According to Corollary \ref{Cor:1} there exists a minimum array  $B \in \M(n,m)$ so that  $B = \begin{pmatrix}
& E(n,k n)  &| & \hat{B}^T  &\\
\end{pmatrix}$ with $\hat{B} \in \M(r, n)$. Hence 
$|\supp \hat{A} | \leq |\supp \hat{B} |$ as $\hat{A}$ is minimum in $\M(r,n)$. Hence 
\[
| \supp B| = kn + |\supp \hat{B} | \geq kn + |\supp \hat{A} | = |\supp A|
\]
and thus $|\supp A|=|\supp B| $ and $A$ is minimum.
\end{proof}

We are ready now to prove Theorem I  that we restate using the arrays $\F(n,m)$.
\begin{lemma}
The arrays $\F(n,m)$ are minimum and thus extremal in $\M(n,m)$. In addition,  
$$
S(n,m)=\Abs{\supp \F(n,m)}=n+m-\gcd(n,m).
$$
\end{lemma}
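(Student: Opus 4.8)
The plan is to prove three things: that $\F(n,m)$ genuinely lies in $\M(n,m)$, that its support has the claimed size, and that no matrix in $\M(n,m)$ can do better. Extremality is then automatic, since the paragraph preceding the lemma records that every minimum matrix is extremal (Corollary 2 in \cite{Xin}). The organizing observation I would exploit is that the pictorial block definition of $\F(n,m)$ is nothing but the recursion
\[
\F(n,m) = \bigl[\, E(n,k_1 n)\ \big|\ \F(r_1,n)^T \,\bigr],
\]
the horizontal concatenation of the $n\times k_1 n$ block $E(n,k_1 n)$ with the $n\times r_1$ block $\F(r_1,n)^T$, where $m = k_1 n + r_1$ is the first Euclidean step. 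Indeed, the blocks $E(r_1,k_2 r_1)^T, E(r_2,k_3 r_2),\dots$ that the picture stacks into the last $r_1$ columns are exactly the transposed blocks of $\F(r_1,n)$, since the Euclidean algorithm for $(r_1,n)$ is the tail $n = k_2 r_1 + r_2,\ r_1 = k_3 r_2 + r_3,\dots$ of the algorithm for $(n,m)$. I would first verify this identity by tracking row and column indices through the staircase, with base case the terminating step $n\mid m$, where $\F(n,m)=E(n,k_1 n)$.

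With the recursion in hand, membership in $\M(n,m)$ follows by induction on the number of Euclidean steps. In the base case $E(n,k_1 n)$ has all row sums $k_1 n = m$ and all column sums $n$. For the step, the inductive hypothesis gives $\F(r_1,n)\in\M(r_1,n)$, with row sums $n$ and column sums $r_1$, so $\F(r_1,n)^T$ has row sums $r_1$ and column sums $n$; concatenating it horizontally with $E(n,k_1 n)$ keeps every column sum equal to $n$ and makes every row sum $k_1 n + r_1 = m$. The support count comes the same way, or more directly by telescoping: each $E(r_i,k_{i+1}r_i)$ contributes exactly $k_{i+1}r_i$ nonzero entries (one per column), and the Euclidean relations give $k_{i+1}r_i = r_{i-1} - r_{i+1}$ with the conventions $r_{-1}=m$, $r_0=n$, $r_{t+1}=0$; summing over all blocks telescopes to $r_{-1}+r_0-r_t-r_{t+1} = m+n-\gcd(n,m)$, using that the last nonzero remainder $r_t$ equals $\gcd(n,m)$.

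The remaining, and to my mind central, task is the matching lower bound $S(n,m)\ge n+m-\gcd(n,m)$, i.e.\ that every $A\in\M(n,m)$ satisfies $\Abs{\supp A}\ge n+m-\gcd(n,m)$. Here I would pass to the bipartite graph $G_A$ on $n+m$ vertices, one per row and one per column, with an edge for each nonzero entry, so that $\Abs{\supp A}$ is its number of edges. A graph on $V$ vertices with $c$ connected components has at least $V-c$ edges, so it suffices to show $G_A$ has at most $\gcd(n,m)$ components. For a component with row set $R$ and column set $C$, every nonzero entry in a row of $R$ lies in a column of $C$ and conversely, so summing the entries of the $R\times C$ block in two ways gives $m\Abs{R}=n\Abs{C}$. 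Writing $n=dn'$, $m=dm'$ with $d=\gcd(n,m)$ and $\gcd(n',m')=1$, this forces $n'\mid\Abs{R}$, hence $\Abs{R}\ge n'$ for each component. Since the components partition the rows, $n=\sum\Abs{R}\ge c\,n' = c\,(n/d)$, whence $c\le d$, giving $\Abs{\supp A}\ge (n+m)-d$.

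The conceptual heart is the lower bound, but thanks to the divisibility identity $m\Abs{R}=n\Abs{C}$ it is short once one thinks in terms of the components of $G_A$. Consequently the step I expect to absorb most of the labor is the bookkeeping of the first two paragraphs: the array is specified only pictorially, so the honest work is to pin the arrangement down as the recursive identity above and to check the index alignment in the base and inductive cases, after which membership, the support count, the equality $S(n,m)=n+m-\gcd(n,m)$, and hence extremality all fall out.
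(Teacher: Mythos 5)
Your proposal is correct, and its decisive step takes a genuinely different route from the paper's. The construction side is identical: the paper's proof rests on the same recursion $\F(n,m)=\bigl[\,E(n,k_1n)\ \big|\ \F(r_1,n)^T\,\bigr]$ and computes $\Abs{\supp\F(n,m)}$ by the same telescoping of the Euclidean relations, with minimality-implies-extremality cited from Corollary 2 of \cite{Xin} exactly as you do. Where you part ways is the lower bound $S(n,m)\ge n+m-\gcd(n,m)$. The paper handles it inside the induction on the number of Euclidean steps: since each of the first $k_1n$ columns of $\F(n,m)$ carries a single nonzero entry, ``the least number any column can have,'' it concludes that $\F(n,m)$ is minimum provided $\F(r_1,n)$ is minimum in $\M(r_1,n)$. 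Note that this reduction implicitly requires the inequality $S(n,m)\ge k_1n+S(r_1,n)$, which the one-entry-per-column observation alone does not furnish, because an arbitrary competitor in $\M(n,m)$ need not carry the block structure of $\F(n,m)$; so the paper's inductive step is closer to a sketch. Your argument replaces it with a self-contained, non-inductive bound: for any $A\in\M(n,m)$, a connected component of the bipartite graph with row set $R$ and column set $C$ satisfies $m\Abs{R}=n\Abs{C}$, forcing $(n/d)\mid\Abs{R}$ with $d=\gcd(n,m)$, hence at most $d$ components and at least $(n+m)-d$ edges. This buys rigor and uniformity in $n,m$, and it meshes with Brualdi's graph characterization that the paper invokes later: combined with your component bound, extremal matrices (whose graphs are forests) have support exactly $n+m-c$ with $c\le d$, which even yields the paper's Theorem II instantly when $\gcd(n,m)=1$. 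One small point worth making explicit in your write-up: every component contains at least one row vertex and one column vertex (all row and column sums are positive), so $\Abs{R}\ge 1$ and the divisibility indeed gives $\Abs{R}\ge n/d$.
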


\begin{proof}
As we have already observed, every minimum array is also extremal.   
To show that    $\F(n,m)$ is minimum we induct on the number of steps needed to complete the Euclidean Algorithm. Note that in view of our notation above, this number is $t+1$. 
If $t=0$, that is  $m=kn$,   the array $E(n, m)$  is of minimum support. So our induction begins.

For the inductive step observe that if the Euclidean algorithm starts with 
$m = k_1n +r_1$ our construction guarantees that  $\F(n,m)$ is the sum of the following two arrays, whose blocks are associated with the same column partition 
\[
A=  \begin{pmatrix}
& E(n,k_1n)  &| & \bigzero  &\\
\end{pmatrix}
\text{ and } 
B= \begin{pmatrix}
&  \bigzero &| &\F(r_1,n)^T & \\
\end{pmatrix}
\]
As $\F(n,m)= A+B$  its                                                                                                                                                                                                                   first $k_1n$ columns are monocolumns, exactly those of $E(n, k_1n)$. Hence according to Corollary \ref{Cor:2}  the array  
\[\F(n,m)= \begin{pmatrix}
& E(n,k_1n)  &| &\F(r_1,n)^T & \\
\end{pmatrix} 
\] is minimum  if $\F(r_1,n)$ is minimum in $\M(r_1,n)$. 
The steps needed  in the  Euclidean algorithm for $(r_1,n)$ are  one  less than those needed for the pair $(n,m )$. Hence the inductive hypothesis implies that    $\F(r_1,n)$ is minimum in  $\M(r_1,n)$. Therefore, $\F(n,m)$ is minimum in $\M(n,m)$ and 
$S(n,m)=\Abs{\supp \F(n,m)}$. 

To compute $\Abs{\supp \F(n,m)}$ we note that in view of \eqref{eq1} and the way $\F(n,m)$ is constructed we get 

\begin{align*}
 \Abs{\supp \F(n,m)} &=  \Abs{\supp E(n,k_1n)} +\Abs{\supp E(r_1,k_2r_1)} + \cdots + \Abs{\supp E(r_t,k_{t+1}r_t)}\\    
  &= k_1n+k_2r_1+\cdots +k_{t}r_{t-1} +k_{t+1}r_t \\
  &= m-r_1+n -r_2 +\cdots + r_{t-2}-r_t+ r_{t-1} \\ 
  &= m+n-r_t
\end{align*}

But  the last non zero remainder in the Euclidean Algorithm (that is $r_t$) is the greatest common divisor of $(n,m)$. This completes the proof of the theorem. 
\end{proof}

According to Proposition 4 in \cite{Caron}, when $r=0$, that is $m = kn$, all the extremal arrays in $\M(n, kn)$ are equivalent to $E(n, kn)$ and thus are minimum. Hence, minimum and extremal arrays coincide in  $\M(n, kn)$.

Furthermore,  according to  Proposition 6 of \cite{Caron}, the same holds when $r=1$. That is, if $m= kn+1$,  every extremal  array $M$  in $ \M(n,m)$ satisfies  
$$\Abs{\supp M }= (k+1)n = n+m -1= S(n, m)$$
and thus  $M$ is extremal if and only if is   minimum. 

This neat characterization of extremal arrays does not hold for $r >1$ in general. A counterexample is given by  the extremal $4 \times 6$ array   
\begin{equation}\label{matrix:C}
T=\begin{pmatrix}
2 &2 &  &  &2  &\\
2 &  &4 &  &  & \\
  &2 &  &4 &  & \\
  &  &  &  &2 &4\\
\end{pmatrix}
\end{equation}
whose support contains 9 non zero entries while  $S(4,6)=8$. One can check that the array is extremal using, for example, Proposition 2 in \cite{Caron}.

Nevertheless,  when $\gcd(n,m)=1$, extremal and minimum arrays in $ \M(n,m)$ coincide. This is  our Theorem II,  that we are  now ready to prove.

\begin{proofofB}
In view of Theorem 5 in \cite{Caron} every extremal array $M \in \M(n,m) $ (with $m = kn+r$ ) is equivalent to the sum of two arrays $M_B$ and $M_R$, were every row of $M_B$ has exactly $k+1$ positive entries while $M_R$ has at  most $r-1$ positive entries. 
Hence every  extremal array $M \in \M(n,m) $ satisfies 
\[
(k+1)n \leq \Abs{\supp M } \leq (k+1)n +(r-1).
\]
On the other hand, if $\gcd(n,m)=1$  and  $m=kn+r$  we get 
\[S(n,m)= n+m -1 = (k+1)n +(r-1).
\]
We conclude that for every extremal $M$ we have 
\[S(n,m) \leq \Abs{\supp M } \leq (k+1)n +(r-1)=S(n, m)
\]
Hence $ \Abs{\supp M } =n+m-1$ and the proposition follows.
\end{proofofB}

The array $T$ in \eqref{matrix:C} is not the only example of an extremal array that is not minimum, but it is of smallest dimensions. Actually, we can produce  arbitrarily large  extremal non-minimum arrays as the next result states.

\begin{prevtheorem}
For every pair of integers $n,m$   that satisfy   
\begin{equation}\label{eq:nmd}
     m=k_1n+d  \text{\,  where  \, } n> d >1 \text{ \, and \, } n=k_2d, 
\end{equation}
there exist an  extremal  array in $\M(n,m)$ that is not minimum. 
\end{prevtheorem}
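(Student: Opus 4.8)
The plan is to translate extremality into a purely combinatorial condition on the bipartite support graph, and then to exhibit, for the given parameters, a matrix whose support is a tree spanning one extra pair of blocks.

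First I would record a graph‑theoretic criterion for extremality. Associate to $M \in \M(n,m)$ the bipartite graph $\Gamma(M)$ on the $n$ rows and $m$ columns, joining row $i$ to column $j$ exactly when the $(i,j)$ entry of $M$ is positive. If $\Gamma(M)$ contains a cycle, then the alternating $\pm 1$ pattern $Z$ around that (necessarily even) cycle has vanishing row and column sums and $\supp Z \subseteq \supp M$; hence $M-\theta Z \in \M(n,m)$ for a suitable $\theta>0$ and annihilates at least one entry, producing $B$ with $\supp B \subsetneq \supp M$, so by Corollary 2 in \cite{Xin} the matrix $M$ is not extremal. Conversely, if $\Gamma(M)$ is a forest, then the entries of any $A \in \M(n,m)$ with $\supp A \subseteq \supp M$ are forced uniquely by the marginals (solve from the leaves inward), so no strictly smaller support can occur and $M$ is extremal. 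Thus $M$ is extremal if and only if $\Gamma(M)$ is acyclic.

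Next I would locate the minimum matrices inside this picture. A forest on the $n+m$ vertices with $c$ connected components has exactly $n+m-c$ edges, and feasibility forces every component to contain at least $n'$ rows and $m'$ columns, where $n'=n/d$, $m'=m/d$ and $d=\gcd(n,m)$. Hence $c \le d$, and the value $S(n,m)=n+m-d$ from the Lemma is attained precisely by the forests with the maximal number $c=d$ of components. Consequently it suffices to produce a single extremal matrix whose support forest has $c=d-1$ components; such a matrix has support $n+m-d+1 > S(n,m)$ and is therefore extremal but not minimum. This is exactly the situation of the $4\times 6$ matrix $T$, where $d=2$ and the support is a spanning tree. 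To build the general example I would start from the block–diagonal minimum matrix consisting of $d$ copies of a fixed extremal matrix of $\M(n',m')$ (these exist and are minimum since $\gcd(n',m')=1$), and then replace two of the blocks by a single $2n'\times 2m'$ matrix $G$ whose support is a \emph{connected} tree with strictly positive entries, leaving the other $d-2$ blocks untouched. The result lies in $\M(n,m)$, its support graph is a forest with $d-1$ components, and a direct count gives support size $n+m-d+1$. For $G$ I would generalize $T$: designate one row as a hub, attach to it a family of degree‑two ``bridge'' columns each meeting exactly one further row, and hang the remaining columns as leaves, each carrying its full column sum $2n'$, so as to exhaust the row sums; the matrix $T$ is the case $n'=2$, $m'=3$.

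The main obstacle is precisely the positivity and connectivity of $G$. The transportation problem on $2n'\times 2m'$ has a unique balanced cut, at $(n',m')$, since a block of $n'$ rows and $m'$ columns carries equal total supply and demand; a generic spanning tree either fails to cross this cut, splitting back into the two minimum blocks, or is forced to carry a zero on the crossing edge. This is also the reason one cannot simply adjoin a single edge to the block–diagonal matrix. One must therefore choose the tree so that the edge straddling the $(n',m')$ cut carries positive flow, and then check that every leaf and bridge entry is strictly positive. With a single hub this works when $k_1=(m-d)/n$ is large enough relative to $n'$; in the remaining range the hub row sum cannot absorb all of the bridges, and one must iterate the construction hierarchically, building a tree of hubs. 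Verifying that some such tree keeps all entries strictly positive, using $n'\ge 2$ and $m'>n'$ to supply the needed slack, is the technical heart of the argument; once $G$ is in hand, extremality and non‑minimality follow immediately from the criterion of the first step.
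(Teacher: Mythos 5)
Your overall framework is sound and coincides with the paper's: extremality is equivalent to acyclicity of the bipartite support graph (this is Brualdi's theorem, which the paper cites rather than reproves), minimum matrices correspond to forests with the maximal number $d$ of connected components, and it therefore suffices to exhibit a realizable forest with $d-1$ components, obtained by fusing two of the $d$ diagonal blocks into a single connected tree. Your component-counting observation (each component must contain a positive multiple of $n'$ rows and of $m'$ columns, hence there are at most $d$ components, hence any forest support has size at least $n+m-d$) is correct and is a nice complement to the paper's direct computation of $\Abs{\supp \F(n,m)}$.

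The gap is that the object carrying the entire proof --- the $2n'\times 2m'$ matrix $G$ with connected tree support and strictly positive entries --- is never actually constructed. You correctly identify the obstruction (a tree crossing the block cut exactly once forces the crossing entry to vanish, because each block is balanced), but your proposed remedy does not close it: by your own account the single-hub tree works only when $k_1$ is large relative to $n'$ (one can check it requires each non-hub row to be saturated with exactly $k_1$ leaf columns, which forces roughly $k_1 \geq 2n'-3$), and for the remaining parameters you appeal to an unspecified ``hierarchical tree of hubs'' whose positivity you explicitly defer as ``the technical heart.'' Since the theorem is claimed for all $n>d>1$ and all $k_1\geq 1$, this is precisely the case your sketch leaves open, so the proof is incomplete. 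The paper's construction shows how to avoid the single-hub bottleneck entirely: keep each block's own $d$-column as a local hub and let the tree cross the cut exactly \emph{twice}. Concretely, split one entry $n$ in the first block's top row into $n-d$ (staying in its column) plus a new entry $d$ placed in the second block's $d$-column, and compensate by moving the entry $d$ of the second block's first row out of its own $d$-column and into the split column of the first block. Row and column sums are preserved, every entry equals $d$, $n-d$ or $n$ (all positive since $n>d$), the support grows by exactly one, and each of the two crossing edges carries flow $d$, so no balanced cut is crossed by a single edge. This works unconditionally, with no case distinction on $k_1$, which is exactly what your proposal is missing.
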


For its proof we will use a  characterization of extremal  arrays using their associated graphs given by Brualdi \cite{Brou1}. We first define the associated graph $\G(A)$ of any $n \times m$ array  $A=(a_{i,j})$ with $a_{i,j} \geq 0$   as follows. For every row $i$ and every column $j$  we get a node $x_i$ and $y_j$ respectively, for $1 \leq i \leq n$ and $1 \leq j \leq m$. There is an edge joining  $x_i$ and $y_j$ if and only if $a_{i,j} > 0$. Then the following theorem  holds, see  \cite{Brou1} and \cite{Brou2}.
           
             \begin{brualdi}
             \it{ A matrix  $M \in \M(n,m)$ is extremal if and only if  the connected components of $\G(M)$ are trees. Equivalently, $\G(M)$ has no cycles.}
             \end{brualdi}

 We are now ready to prove Theorem III.   
            
\begin{proof}
Assume $n,m$ are as above then $\gcd(n,m)=d$ while $S(n,m)=n+m-d$. The Euclidean Algorithm stops in two steps and our method produces  

\[
\F(n,m)= \begin{pNiceMatrix}
E(n,k_1n) &\mid  &E(d,k_2d)^T \\
\end{pNiceMatrix}
\]

which is equivalent to the following array in block form

 \[
 X=
 \underbrace{ 
\begin{pNiceMatrix}
B &       & \\
  &\Ddots & \\
 &        &B \\
\end{pNiceMatrix}  
}_{d \text{\, $B$-blocks }}  
\]
where every block $B$  is 

\[ 
B=\small
\left. \left( 
                  \vphantom{\begin{array}{c}1\\1\\1\\1\\1\\1\\1\\ 1\\ \end{array}}
                  \smash{\underbrace{
                      \begin{array}{ccccc}
                      &d & \overbrace{n \cdots  n}^{k_1} &                       &  \\
                       &d  &                    & \overbrace{n  \cdots  n}^{k_1} & \\
                      &\vdots  &                   &\ddots                    & \\
                       &d &                   &                      &  \overbrace{n  \cdots  n}^{k_1}
                       \end{array}
                      }_{k_1k_2+1 \text{ columns } } }
              \right) \quad \right\} 
              \,k_2 \text{ rows}
            \\
 \]
              
 \ \\
 \ \\

As $d \geq 2$ there exist at least two blocks in the array $X$. We replace  the first two $B$-blocks in $X$  with the  $2k_2 \times 2(k_1k_2 +1)$ array

\[
\small
\small
 C = \left(   \phantom{\begin{matrix}\\ \quad \\  0 \\ \quad \\ \quad \\ \quad \\  \overbrace{n \cdots  n}^{k_1}\\  \ddots\\  \overbrace{n  \cdots  n}^{k_1} \\\end{matrix}}
\hspace{-3em}
                  \begin{array}{@{}*{11}{c}@{}}
                      &d      &\quad n-d \,  &\quad \overbrace{n \cdots  n}^{k_1-1} &&                                &\quad                                  &d &\quad  & &\\
                      &d      &\quad0    &                                & \overbrace{n \cdots  n}^{k_1} &                                &\quad                                 &0 & & &\\
                      &\vdots &     &                                &\ddots &                                &                                 &  &\quad  & & \\
                       &d     &     &                                 && \overbrace{n  \cdots  n}^{k_1} &                                 &  &\quad  & & \\
                       &      &\quad d    &                                 & &                               &\overbrace{n  \cdots  n}^{k_1}  &\quad  & &&\\
                       &      &     &                                 & &                               &                    &\quad d &\quad  \overbrace{n \cdots  n}^{k_1} & &  \\
                       &      &     &                                 & &                               &                    &\quad \vdots &\quad                        & \ddots & \\
                       &      &     &                                 & &                               &                    &\quad d &\quad &&  \overbrace{n  \cdots  n}^{k_1}\\
                 \end{array} 
                 \right)
 \hspace{-2em}
\phantom{\begin{matrix}\\\quad \\  0 \\ \quad \\ \quad \\ \quad \\  \overbrace{n \cdots  n}^{k_1}\\  \ddots\\  \overbrace{n  \cdots  n}^{k_1} \\ \end{matrix}}
\begin{array}{l}
\left.\lefteqn{\phantom{\begin{matrix} \\ \quad \\  0 \\ \quad \\ \quad \\ \quad  \\ \overbrace{n \cdots  n}^{k_1} \end{matrix}}}\right\}k_2+1 \\
\left.\lefteqn{\phantom{\begin{matrix} \\ \overbrace{n \cdots  n}^{k_1}\\  \ddots\\  \overbrace{n  \cdots  n}^{k_1}  \end{matrix}}} \right\}k_2-1 
\end{array}
 \]

So we get 
  \[
  Y=\underbrace{
\begin{pNiceMatrix}
C& &       & \\
&B & & \\
&  &\Ddots & \\
 & &        &B \\
\end{pNiceMatrix}  
}_{d-2 \text{\, $B$-blocks }}  
\]
Observe that the array $T$ in \eqref{matrix:C} is a special case  of $Y$ when $d=2=k_2$ and $ k_1=1$.      
   
    Clearly $Y$ is not minimum as 
    \[
    \Abs{\supp C}= k_1+2 + (k_1+1)\cdot (2k_2-1)= 2k_2(k_1+1)+1 = 2\Abs{\supp B}+1
    \]
and therefore 
    \[
    \Abs{\supp Y}=\Abs{\supp C}+ (d-2) \Abs{\supp B}= d\Abs{\supp B} +1 = \Abs{\supp X}+1
    \]
    
It remains to show that  $Y$ is extremal. 
              $Y$ is defined as a direct  sum of the  block arrays $C$ and $(d-2)$-copies of $B$. Each one of those blocks contributes to the graph $\G(Y)$ one or more  connected components. Clearly those components that are associated with $B$ are trees. (This can be seen either directly from the array $B$ or from the fact that $X$ is extremal and $X$ is a direct sum of $d$ blocks, all equal to $B$.)
              
              We conclude that $Y$ is extremal if and only if the associated graph $\G(C)$ of $C$ is a tree. Which is indeed so, as the graph  $\G(C)$ is
               
\begin{center}     
        \,   \begin{picture}(0,0)%
\includegraphics{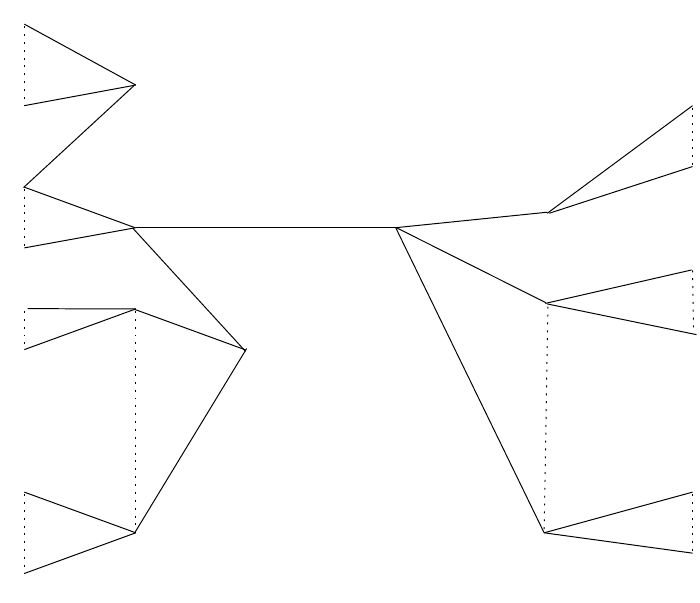}%
\end{picture}%
\setlength{\unitlength}{4144sp}%
\begingroup\makeatletter\ifx\SetFigFont\undefined%
\gdef\SetFigFont#1#2#3#4#5{%
  \reset@font\fontsize{#1}{#2pt}%
  \fontfamily{#3}\fontseries{#4}\fontshape{#5}%
  \selectfont}%
\fi\endgroup%
\begin{picture}(5319,4575)(111,-6213)
\put(5106,-5375){\makebox(0,0)[lb]{\smash{{\SetFigFont{8}{9.6}{\rmdefault}{\mddefault}{\updefault}{\color[rgb]{0,0,0}$y_{k_1(2k_2-1)+3}$}%
}}}}
\put(1145,-2430){\makebox(0,0)[lb]{\smash{{\SetFigFont{8}{9.6}{\rmdefault}{\mddefault}{\updefault}{\color[rgb]{0,0,0}$x_{k_2+1}$}%
}}}}
\put(1145,-5840){\makebox(0,0)[lb]{\smash{{\SetFigFont{8}{9.6}{\rmdefault}{\mddefault}{\updefault}{\color[rgb]{0,0,0}$x_{k_2}$}%
}}}}
\put(4257,-3360){\makebox(0,0)[lb]{\smash{{\SetFigFont{8}{9.6}{\rmdefault}{\mddefault}{\updefault}{\color[rgb]{0,0,0}$x_{k_2 +2}$}%
}}}}
\put(4257,-5840){\makebox(0,0)[lb]{\smash{{\SetFigFont{8}{9.6}{\rmdefault}{\mddefault}{\updefault}{\color[rgb]{0,0,0}$x_{2k_2}$}%
}}}}
\put(296,-4444){\makebox(0,0)[lb]{\smash{{\SetFigFont{8}{9.6}{\rmdefault}{\mddefault}{\updefault}{\color[rgb]{0,0,0}$y_{2k_1+1}$}%
}}}}
\put(296,-6151){\makebox(0,0)[lb]{\smash{{\SetFigFont{8}{9.6}{\rmdefault}{\mddefault}{\updefault}{\color[rgb]{0,0,0}$y_{k_1k_2+1}$}%
}}}}
\put(5106,-3049){\makebox(0,0)[lb]{\smash{{\SetFigFont{8}{9.6}{\rmdefault}{\mddefault}{\updefault}{\color[rgb]{0,0,0}$y_{k_1(k_2+2)+2}$}%
}}}}
\put(5106,-2430){\makebox(0,0)[lb]{\smash{{\SetFigFont{8}{9.6}{\rmdefault}{\mddefault}{\updefault}{\color[rgb]{0,0,0}$y_{k_1(k_2+1)+3}$}%
}}}}
\put(5106,-3670){\makebox(0,0)[lb]{\smash{{\SetFigFont{8}{9.6}{\rmdefault}{\mddefault}{\updefault}{\color[rgb]{0,0,0}$y_{k_1(k_2+2)+3}$}%
}}}}
\put(5106,-4289){\makebox(0,0)[lb]{\smash{{\SetFigFont{8}{9.6}{\rmdefault}{\mddefault}{\updefault}{\color[rgb]{0,0,0}$y_{k_1(k_2+3)+2}$}%
}}}}
\put(5106,-5995){\makebox(0,0)[lb]{\smash{{\SetFigFont{8}{9.6}{\rmdefault}{\mddefault}{\updefault}{\color[rgb]{0,0,0}$y_{2k_1k_2+2}$}%
}}}}
\put(4317,-4079){\makebox(0,0)[lb]{\smash{{\SetFigFont{8}{9.6}{\rmdefault}{\mddefault}{\updefault}{\color[rgb]{0,0,0}$x_{k_2+3}$}%
}}}}
\put(1120,-3909){\makebox(0,0)[lb]{\smash{{\SetFigFont{8}{9.6}{\rmdefault}{\mddefault}{\updefault}{\color[rgb]{0,0,0}$x_2$}%
}}}}
\put(291,-5305){\makebox(0,0)[lb]{\smash{{\SetFigFont{8}{9.6}{\rmdefault}{\mddefault}{\updefault}{\color[rgb]{0,0,0}$y_{k_1(k_2-1)+2}$}%
}}}}
\put(2029,-4344){\makebox(0,0)[lb]{\smash{{\SetFigFont{8}{9.6}{\rmdefault}{\mddefault}{\updefault}{\color[rgb]{0,0,0}$y_1$}%
}}}}
\put(2794,-3236){\makebox(0,0)[lb]{\smash{{\SetFigFont{8}{9.6}{\rmdefault}{\mddefault}{\updefault}{\color[rgb]{0,0,0}$y_{k_1(k_2+1)+2}$}%
}}}}
\put(1150,-3270){\makebox(0,0)[lb]{\smash{{\SetFigFont{8}{9.6}{\rmdefault}{\mddefault}{\updefault}{\color[rgb]{0,0,0}$x_1$}%
}}}}
\put(131,-2535){\makebox(0,0)[lb]{\smash{{\SetFigFont{8}{9.6}{\rmdefault}{\mddefault}{\updefault}{\color[rgb]{0,0,0}$y_{k_1(k_2+1)+1}$}%
}}}}
\put(151,-1779){\makebox(0,0)[lb]{\smash{{\SetFigFont{8}{9.6}{\rmdefault}{\mddefault}{\updefault}{\color[rgb]{0,0,0}$y_{k_1k_2+2}$}%
}}}}
\put(136,-3024){\makebox(0,0)[lb]{\smash{{\SetFigFont{8}{9.6}{\rmdefault}{\mddefault}{\updefault}{\color[rgb]{0,0,0}$y_2$}%
}}}}
\put(126,-3645){\makebox(0,0)[lb]{\smash{{\SetFigFont{8}{9.6}{\rmdefault}{\mddefault}{\updefault}{\color[rgb]{0,0,0}$y_{k_1+1}$}%
}}}}
\put(141,-3944){\makebox(0,0)[lb]{\smash{{\SetFigFont{8}{9.6}{\rmdefault}{\mddefault}{\updefault}{\color[rgb]{0,0,0}$y_{k_1+2}$}%
}}}}
\end{picture}%

 \end{center}
 
   Hence $Y$ is extremal and the theorem follows.           
               \end{proof}

               We conclude this note with a few more examples of arrays that serve as counterexamples to possible  generalizations of the results mentioned.
 
\begin{remark} 
               
               The array 
\[
F= \begin{pmatrix}
3 & 0 & 0 & 1\\
0&2&2&0\\
0&1&1&2\\
\end{pmatrix}               
\]       
is an element of $\M(3,4)$ with   $ \Abs{\supp F}= 7=S(3,4)+1$ but it is not extremal. Hence it is not the case that any 
doubly stochastic  array in $\M(n,m)$  whose support is just one  above the minimum support of $\M(n,m)$ must be extremal.
\end{remark}

\begin{remark}
 
 Clearly a possible generalization of Birkhoff's theorem  to non-square doubly stochastic arrays, stating  that any two extremal arrays in $\M(n,m)$  are equivalent fails. This can be easily seen as there exist plenty of examples of extremal arrays $A,B \in \M(n,m)$  with   $ \Abs{\supp A} \neq \Abs{\supp B}$. As minimum and extremal arrays coincide in $\M(n,n)$, we entertained  the idea that, maybe, any two minimum arrays in $\M(n,m)$ are equivalent. (If this were true  Birkhoff's theorem would be a special case.) But this fails too, as the next two minimum arrays in $\M(4,5)$ prove.
 \[
\begin{pmatrix}
1 &4 &0 &0 &0\\
0 & 0 &4 &0 &1 \\
3 &0 & 0& 2 &0\\
0& 0&0 &2&3
\end{pmatrix}
\text{ and } 
\begin{pmatrix}
4 &0 & 0  &0 & 1\\
0 & 4 & 0 &0& 1\\
0 & 0& 4 &0&1\\
0&0&0&4&1
\end{pmatrix}
\]     
\end{remark}

Nevertheless, we have not managed, so far, to produce two minimum arrays whose set of entries are equal (counting multiplicities)  without being  equivalent.  We should mention here that the way $\F(n,m)$ are constructed ensures that the entries of $\F(n,m)$ are $\{ n, r_1, r_2, \cdots r_t\}$
(using the notation in \eqref{eq1}) appearing  with multiplicities 
$$\{ k_1n,\, k_2r_1,\, k_3r_2,\, \cdots , k_{t+1}r_t \}$$ respectively.             

\section*{Acknowledgement}
The author would like to thank Ian Wanless for his valuable remarks  about the presentation of the paper and 
M. Etkind and N. Lev for spotting a gap in a previous version of Theorem 1.

\end{document}